\newcommand{\triple}{{\vert\kern-0.25ex\vert\kern-0.25ex\vert}}
\theoremstyle{plain}
\newcommand*\bigcdot{\mathpalette\bigcdot@{.45}}
\newcommand*\bigcdot@[2]{\mathbin{\vcenter{\hbox{\scalebox{#2}{$\m@th#1\bullet$}}}}}
\newtheorem{definition}{Definition}[section]
\newtheorem{theorem}[definition]{Theorem}
\newtheorem{corollary}[definition]{Corollary}
\theoremstyle{definition}
\newtheorem{remark}[definition]{Remark}
\renewcommand\labelenumi{(\roman{enumi})}
\renewcommand\theenumi\labelenumi
\begin{document}
\title{\bf On the Analysis of a Generalised Rough Ait-Sahalia Interest Rate Model }
\author
{{\bf Emmanuel Coffie,\
\bf Xuerong Mao,\
\bf Frank Proske}}
\date{}
\maketitle
\begin{abstract}
Fractional Brownian motion with the Hurst parameter $H<\frac{1}{2}$ is used widely, for instance, to describe a 'rough' stochastic volatility process in finance. In this paper, we examine an Ait-Sahalia-type interest rate model driven by a fractional Brownian motion with $H<\frac{1}{2}$ and establish theoretical properties such as an existence-and-uniqueness theorem, regularity in the sense of Malliavin differentiability and higher moments of the strong solutions.
\medskip \noindent
\\
{\small\bf Keywords}: Stochastic interest rate, rough volatility, fractional Brownian motion, strong solution, higher moments
\end{abstract}
\section{Introduction}\label{sec1}
Over the years, SDEs driven by noise with $\alpha^-$-H\"older continuous random paths for $\alpha\in[\frac{1}{2},1)$ have been applied to model the dynamical behaviour of asset prices in finance. See e.g. \cite{Karatzas,Lamberton} and the references therein. However, in recent years, empirical evidence (see e.g. \cite{Gatheral}) has shown that volatility paths of asset prices are more irregular in the sense of $\alpha^-$-H\"older  continuity for $\alpha\in(0,\frac{1}{2})$ in many instances. This inadequacy actually showed the need for models based on SDEs driven by a noise of low $\alpha^-$-H\"older regularity with $\alpha\in(0,\frac{1}{2})$ which has been used by researchers and practitioners to describe the volatility dynamics of asset prices. These models are driven by rough signals that can capture well the 'roughness' in the volatility process of asset prices. Such rough signals arise e.g. from paths of the fractional Brownian motion (fBm). The fractional Brownian motion is a generalisation of the ordinary Brownian motion. It is a centred self-similar Gaussian process with stationary increments which depends on the Hurst parameter H. The Hurst parameter lies in $(0,1)$ and controls the regularity of the sample paths in the sense of a.e. (local) $H^-$-H\"older  continuity. The smaller the Hurst parameter, the rougher the sample paths and vice versa. For instance, the authors in \cite{amine} employ the fractional Brownian motion with $H<\frac{1}{2}$ to model the 'rough' volatility process of asset prices and derive a representation of the sensitivity parameter delta for option prices.  Similarly, the authors in \cite{Coffie} also consider an asset price model in connection with the sensitivity analysis of option prices whose correlated 'rough' volatility dynamics is described by means of a SDE driven by a fractional Brownian motion with $H<\frac{1}{2}$. The reader may consult \cite{Nualart, Biagini} for the coverage of properties and financial applications of the fractional Brownian motion with $H<\frac{1}{2}$ . See also the Appendix.
\par
In the context of interest rate modelling, Ait-Sahalia proposed a new class of highly nonlinear stochastic models in \cite{ait} for the evolution of interest rates through time after rejecting existing univariate linear-drift stochastic models based on empirical studies. In this model, (short term) interest rates $x_t$ have the SDE dynamics
\begin{equation}\label{intro:eq:1}
  dx(t)=(\alpha_{-1}x(t)^{-1}-\alpha_{0}+\alpha_{1}x(t)-\alpha_{2}x(t)^{2})dt+\sigma x(t)^{\theta}dB_t
\end{equation}
on $t\ge 0$ with initial value $x(0)=x_0$, where $\alpha_{-1},\alpha_{0},\alpha_{1}, \alpha_{2}>0$, $\sigma>0$, $\theta >1$ and $B_t$ is a scalar Brownian motion. This type of interest rate model has been studied by many authors (see e.g. \cite{Szpruch, Emma}).
\par
In order to capture "rough" (short term) interest rates, e.g. in turbulent bond markets, one may replace  the driving noise $B_{\bigcdot}$ in \eqref{intro:eq:1} by a fractional Brownian motion $B^H_{\bigcdot}$ and consider an interest rate  model based on the SDE
\begin{equation}\label{intro:eq:2}
  dx(t)=(\alpha_{-1}x(t)^{-1}-\alpha_{0}+\alpha_{1}x(t)-\alpha_{2}t^{2H-1}x(t)^{\rho})dt+\sigma x(t)^{\theta}dB_t^H
\end{equation}
for $t\ge 0$ with initial value $x(0)=x_0$, $t\in(0,1]$, $H\in(0,\frac{1}{2})$ and $\rho>1$. The stochastic integral for the fractional Brownian motion in \eqref{intro:eq:2} is defined via an integral concept in \cite{Nualart} and related to a Wick-It\^{o}-Skorohod type of integral. See also Section \ref{sec6}.
\par
On the other hand, an alternative model to \eqref{intro:eq:2} for the description of rough interest rate dynamics could be the following SDE, which "preserves" the classical drift structure of the Ait-Sahalia model in \eqref{intro:eq:1}:
\begin{equation}\label{intro:eq:3}
  dx(t)=(\alpha_{-1}x(t)^{-1}-\alpha_{0}+\alpha_{1}x(t)-\alpha_{2}x(t)^{\rho})dt+\sigma x(t)^{\theta}d^{\circ}B_t^H
\end{equation}
for $t\ge 0$ and $H\in(0,\frac{1}{2})$, where $\sigma x(t)^{\theta}d^{\circ}B_t^H$ stands for a stochastic integral in the sense of Russo and Vallois. See Section \ref{sec6}.
\par 
 Although we also obtain an existence and uniqueness result for solutions to \eqref{intro:eq:3} (see Theorem \ref{final}), we mainly focus in this paper on the study of SDE \eqref{intro:eq:2}. 
\par 
The remainder of the paper is organised as follows: In Section \ref{sec3}, we introduce the fractional Ait-Sahalia interest rate model. We establish an existence and uniqueness result for solutions to SDE \eqref{intro:eq:2} in Section \ref{sec6} by studying the properties of solutions to an associated SDE driven by an additive fractional noise (see Sections \ref{sec4} and \ref{sec5}). In addition, we also discuss the alternative model \eqref{intro:eq:3} in Section \ref{sec6}.
\section{The fractional Ait-Sahalia model}\label{sec3}
Throughout this paper unless specified otherwise, we employ the following notation. Let $(\Omega, \mathcal{F},\mathbb{P})$ be a complete probability space with filtration $\{ \mathcal{F}_t\}_{t\geq 0}$ satisfying the usual conditions (i.e., it is increasing and right continuous while $\mathcal{F}_0$ contains all $\mathbb{P}$ null sets). Denote $\mathbb{E}$ as the expectation corresponding to $\mathbb{P}$. Suppose that $B^H_t$, $0\le t\le 1$, is a scalar fractional Brownian motion (fBm) with Hurst parameter $H\in (0,\frac{1}{2})$ and $B_t$, $0\le t\le 1$, is a scalar Brownian motion defined on this probability space. 
\par
In what follows, we are interested to study the SDE 
\begin{equation}\label{chap1:eq:1}
x_t=x_0+\int_0^t\big(\alpha_{-1}x_s^{-1}-\alpha_0+\alpha_1x_s-\alpha_2s^{2H-1}x_s^{\rho}\big)ds+\int_0^t\sigma x^{\theta}_sdB_s^H,
\end{equation}
$x_0\in(0,\infty)$, $0\le t\le 1$, where $H\in(\frac{1}{3},\frac{1}{2})$, $\tilde{\theta}>0$, $\rho>1+\frac{1}{H\tilde{\theta}}$, $\theta:=\frac{\tilde{\theta}+1}{\tilde{\theta}}$, $\sigma>0$ and $\alpha_i>0$, $i=-1,\cdots,2$. Here the stochastic integral term with respect to $B_{\bigcdot}^H$ in \eqref{chap1:eq:1} is defined by means of an integral concept introduced by F. Russo, P. Vallois in \cite{Errami}. See Section \ref{sec6}. 
\par
As already mentioned in the introduction, solutions to the SDE \eqref{chap1:eq:1} can be used as a model (fractional Ait-Sahalia model) for the description of the dynamics of (short term) interest rates in finance. In fact, in this paper, we aim at establishing the existence and uniqueness of strong solutions $x_t>0$ to SDE \eqref{chap1:eq:1}. In doing so, we show that such solutions can be obtained as transformations to the SDE
\begin{equation}\label{sec2:eq1}
y_t=x+\int_0^t \tilde{f}(s,y_s)ds-\tilde{\sigma} B_t^H, \quad 0\le t\le 1, \quad H\in(0,\frac{1}{2}),
\end{equation}
where 
\begin{align}\label{sec2:eq2}
\tilde{f}(s,y)&=\alpha_{-1}(-\tilde{\theta} y^{2\tilde{\theta}+1})+\alpha_0y^{\tilde{\theta}+1}-\alpha_1\frac{y}{\tilde{\theta}}+\alpha_2 s^{2H-1}\frac{1}{\tilde{\theta}^{\rho}} y^{-\tilde{\theta}\rho+\tilde{\theta}+1}-\tilde{\sigma}Hs^{2H-1}y^{-1}(\tilde{\theta}+1),
\end{align}
where $\tilde{\sigma}>0$, $0< s\le 1$, $0<y<\infty$. See Section \ref{sec6}. 
\par
In the sequel, we want to prove the following new properties for solutions to SDE \eqref{sec2:eq1}:
\begin{itemize}
\item Existence and uniqueness of positive strong solutions (Corollary  \ref{thrm1})
\item Regularity of solutions in the sense of Malliavin differentiability (Theorem \ref{Thrm 4.2})
\item Existence of higher moments (Theorem \ref{highermoment}).
\end{itemize}

\section{Existence and uniqueness of solutions to singular SDEs with additive fractional noise for $H< \frac{1}{2}$}\label{sec4}
In this section, we wish to analyse the following generalisation of the SDE \eqref{sec2:eq1} given by
\begin{equation}\label{sec2:eq3}
x_t=x_0+\int^t_0b(s,x_s)ds+\sigma B^H_t,\quad 0\le t\le 1,\quad H\in(0,\frac{1}{2}),\quad \sigma>0.
\end{equation}
We require the following conditions
\begin{description}
\item (A1) $b\in C\big((0,1)\times (0,\infty)\big)$ and has a continuous spatial derivative $b^{\prime}:=\frac{\partial }{\partial x}b$ such that 
\begin{equation*}
b'(t,x)\le K_t, \quad 0< t< 1, \quad x\in(0,\infty),
\end{equation*}
where $K_t:=t^{2H-1}K$ for some $K\ge 0$.
\item (A2) There exist $x_1>0$, $\alpha>\frac{1}{H}-1$ and $h_1>0$ such that $b(t,x)\ge h_1t^{2H-1}x^{-\alpha}$, $t\in(0,1]$, $x\le x_1$.
\item (A3) There are $x_2>0$ and $h_2>0$ such that $b(t,x)\le h_2t^{2H-1}(x+1)$, $t\in(0,1]$, $x\ge x_2$.
\end{description}
\begin{theorem}\label{thrm1}
Suppose that \textup{(A1-A3)} hold. Then for all $x_0>0$ the \textup{SDE} \eqref{sec2:eq3} has a unique strong positive solution $x_t$, $0\le t\le1$.
\end{theorem}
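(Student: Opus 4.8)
The plan is to solve \eqref{sec2:eq3} pathwise, exploiting that the noise is additive. Fix $\omega$ outside the null set on which the path $t\mapsto\sigma B^H_t(\omega)$ is unbounded on $[0,1]$ or fails to be $\gamma$-H\"older continuous there for some $\gamma<H$, and put $\psi_t:=\sigma B^H_t(\omega)$ (so $\psi_0=0$). Then a continuous $x$ solves \eqref{sec2:eq3} for this $\omega$ if and only if $z_t:=x_t-\psi_t$ solves the random ODE $\dot z_t=b(t,z_t+\psi_t)$, $z_0=x_0$, on the set where $z_t+\psi_t>0$; so it suffices to build a unique such $z$ and set $x_t:=z_t+\psi_t$. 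By \textup{(A1)} the map $(t,z)\mapsto b(t,z+\psi_t)$ is continuous on its domain and locally Lipschitz in $z$, with local Lipschitz constant bounded by $t^{2H-1}K$ uniformly for $t$ in compact subsets of $(0,1)$, which is integrable near $t=0$; taking also \textup{(A3)} into account, $b$ is dominated, along locally bounded paths staying in $(0,\infty)$, by an integrable function of $t$ near $0$, so the initial value $z_0=x_0$ may be prescribed. Classical ODE theory then yields a unique maximal solution $z$ on an interval $[0,T^*)$, $T^*\le1$, along which $x_t=z_t+\psi_t\in(0,\infty)$; since $z$ is a measurable functional of the path $\psi$, $x$ is $\{\mathcal F_t\}$-adapted, hence a strong solution. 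It remains to prove $T^*=1$, which amounts to two a priori estimates on $[0,T^*)$: $x$ does not explode to $+\infty$, and $x$ does not approach $0$.

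For the first, I would combine \textup{(A1)} with \textup{(A3)}: since $x\mapsto b(t,x)-t^{2H-1}Kx$ is non-increasing, evaluating at $x_2$ and using \textup{(A3)} gives $b(t,x)\le t^{2H-1}(C+Kx)$ for all $x\ge x_2$, with $C:=h_2(x_2+1)$. If $x_t\to+\infty$ as $t\uparrow T^*$, let $t_2:=\sup\{t<T^*:x_t=x_2\}$ (set $t_2:=0$ if this set is empty); then $x_t\ge x_2$ on $[t_2,T^*)$, so from the integral form of \eqref{sec2:eq3},
\begin{equation*}
x_t\ \le\ x_{t_2}+2\sup_{[0,1]}|\psi|+\int_{t_2}^{t}s^{2H-1}\big(C+Kx_s\big)\,ds ,\qquad t\in[t_2,T^*),
\end{equation*}
and since $\int_0^1 s^{2H-1}\,ds=\tfrac{1}{2H}<\infty$, Gr\"onwall's inequality bounds $x$ on $[t_2,T^*)$ — contradicting $x_t\to+\infty$. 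Hence $\sup_{[0,T^*)}x_t<\infty$.

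The positivity is the heart of the matter and the step I expect to be the main obstacle; it is also the only place the hypothesis $\alpha>\tfrac1H-1$ is used. Choose $\gamma\in\big(\tfrac{1}{1+\alpha},\,H\big)$ — a non-empty interval precisely because $\alpha>\tfrac1H-1$ — and let $C_\omega$ be the $\gamma$-H\"older constant of $\psi$ on $[0,1]$. Suppose, for contradiction, that $\inf_{[0,T^*)}x_t=0$. Fix $\varepsilon\in(0,\min(x_0,x_1))$, pick $\tau\in(0,T^*)$ with $x_\tau\le\varepsilon/2$, and set $a:=\sup\{t<\tau:x_t=\varepsilon\}$, so that $0<a<\tau$, $x_a=\varepsilon$, and $0<x_s\le\varepsilon\le x_1$ for $s\in[a,\tau]$. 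By \textup{(A2)} and $s\le1$ we have $b(s,x_s)\ge h_1 s^{2H-1}x_s^{-\alpha}\ge h_1\varepsilon^{-\alpha}$ on $[a,\tau]$; integrating $\dot z=b(\cdot,x_\cdot)$ and using $z=x-\psi$ together with $x_\tau<\varepsilon$,
\begin{equation*}
h_1\varepsilon^{-\alpha}(\tau-a)\ \le\ \int_a^{\tau}b(s,x_s)\,ds\ =\ z_{\tau}-z_a\ =\ (x_{\tau}-\varepsilon)+(\psi_a-\psi_{\tau})\ \le\ \psi_a-\psi_{\tau}\ \le\ C_\omega(\tau-a)^{\gamma},
\end{equation*}
hence $\tau-a\le(C_\omega/h_1)^{1/(1-\gamma)}\varepsilon^{\alpha/(1-\gamma)}$; while, since $\int_a^{\tau}b(s,x_s)\,ds\ge0$,
\begin{equation*}
\tfrac{\varepsilon}{2}\ \le\ \varepsilon-x_{\tau}\ =\ x_a-x_{\tau}\ =\ -\!\int_a^{\tau}b(s,x_s)\,ds+(\psi_a-\psi_{\tau})\ \le\ \psi_a-\psi_{\tau}\ \le\ C_\omega(\tau-a)^{\gamma},
\end{equation*}
hence $\tau-a\ge(\varepsilon/(2C_\omega))^{1/\gamma}$. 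Combining the two estimates,
\begin{equation*}
\varepsilon^{\,\frac{1-\gamma(1+\alpha)}{\gamma(1-\gamma)}}\ \le\ (2C_\omega)^{1/\gamma}\,(C_\omega/h_1)^{1/(1-\gamma)} ,
\end{equation*}
a quantity free of $\varepsilon$; since $\gamma(1+\alpha)>1$ and $0<\gamma<1$, the exponent on the left is negative, so letting $\varepsilon\downarrow0$ forces the left-hand side to $+\infty$ — a contradiction. Thus $x$ stays bounded away from $0$ on $[0,T^*)$. Together with the bound of the previous paragraph, this prevents the maximal solution from terminating before $t=1$, so $T^*=1$; uniqueness on $[0,1]$ is inherited from the local uniqueness of the random ODE, completing the proof. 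In summary, the genuinely delicate point is this positivity estimate — pitting the singular lower growth of $b$ near $x=0$ encoded in \textup{(A2)} against the $\gamma$-H\"older fluctuations of the fractional noise — and it is precisely there that the threshold $\alpha>\tfrac1H-1$ is forced.
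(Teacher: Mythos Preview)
Your proof is correct and follows the same overall architecture as the paper's: solve pathwise as a random ODE, obtain local solutions from standard theory, rule out explosion to $+\infty$ via \textup{(A3)} and Gronwall, and rule out absorption at zero by pitting the singular lower bound \textup{(A2)} against the H\"older regularity of the fractional noise. The difference lies in the positivity step. The paper works directly at the putative hitting time $\tau_0$ of zero: from $0=x_{\tau_0}=x_t+\int_t^{\tau_0}b(s,x_s)\,ds+(B^H_{\tau_0}-B^H_t)$ with $b\ge0$ it first extracts $x_t\le\|B^H\|_\beta(\tau_0-t)^\beta$, then feeds this back into $\int_t^{\tau_0}h_1 s^{2H-1}x_s^{-\alpha}\,ds$ and evaluates the resulting integral, with a case split on whether $\alpha\beta\ge1$. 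Your $\varepsilon$-level-crossing argument instead squeezes the length $\tau-a$ between two bounds --- an upper bound because the large drift $\ge h_1\varepsilon^{-\alpha}$ must be offset by the noise increment, and a lower bound because that increment must move $x$ downward by at least $\varepsilon/2$ --- which become incompatible as $\varepsilon\downarrow0$ precisely when $\gamma(1+\alpha)>1$. Both routes invoke the same threshold $\alpha>\tfrac1H-1$ for the same reason (it makes the interval for the H\"older exponent $\gamma$ or $\beta$ non-empty). Your version is arguably tidier --- no case distinction, no explicit integral to evaluate --- while the paper's makes more visible the mechanism that $x_t$ vanishes like $(\tau_0-t)^\beta$ near the absorption time.
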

\begin{proof}
Without loss of generality, let $\sigma=1$. We are required to establish the following analytical properties.
\begin{enumerate}
\item Uniqueness: Suppose $x_{\bigcdot}$ and $y_{\bigcdot}$ are two solutions to \eqref{sec2:eq3}. Then 
\begin{align*}
x_t-y_t=\int_0^t\big(b(s,x_s)-b(s,y_s)\big)ds.
\end{align*}
So, using the product rule, the mean value theorem and (A1), we get 

\begin{align*}
(x_t-y_t)^2&=2\int_0^t\big(b(s,x_s)-b(s,y_s)\big)(x_s-y_s)ds\\
&\le 2\int_0^tK_s(x_s-y_s)^2ds.
\end{align*}
Hence, Gronwall's Lemma implies that 
\begin{align*}
x_t-y_t=0,\quad 0\le t\le 1.
\end{align*}

\item Existence: Let $x_0>0$. Because of the regularity assumptions imposed on $b$, we know that the equation \eqref{sec2:eq3} has (path-by-path) local solutions. Define the stopping times
\begin{equation*}
\tau_0:=\inf\{t\in[0,1]:x_t=0\}
\text{ and }
\tau_n:=\inf\{t\in[0,1]:x_t\ge n\},
\end{equation*}
where $\inf\emptyset:=1^+$. Just as in \cite{Zhang}, we want to prove that $\tau_0=1^+$ and $\lim_{n\rightarrow \infty}\tau_n=1^+$. Here $1^+$ stands for an artificially added element larger than 1. Suppose that $\tau_0\le 1$. Then there is a $\hat{\tau}_0\in (0,\tau_0]$ such that $x_t\le x_1$ for all $(\hat{\tau}_0,\tau_0]$. By (A2), we know that $b(t,x)>0$ for $x\in(0,x_1)$ and $t>0$. Hence,
\begin{equation}
0=x_{\tau_0}=x_t+\int_t^{\tau_0}b(s,x_s)ds+B^H_{\tau_0}-B^H_t,\quad t\in (\hat{\tau}_0,\tau_0].
\end{equation}
This implies
\begin{equation}
x_t\le \vert B^H_{\tau_0}-B^H_t\vert\le \vert\vert B_{\bigcdot}^H\vert\vert_{\beta}(\tau_0-t)^{\beta},\ \text{$t\in (\hat{\tau}_0,\tau_0]$ for $\beta\in (0,H)$}.
\end{equation}
Here $\vert\vert \cdot\vert\vert_{\beta}$ denotes the H\"older-seminorm given by 
\begin{equation*}
\vert\vert f\vert\vert_{\beta}=\sup_{0\le s<t\le 1}\frac{\vert f(s)-f(t)\vert}{(t-s)^{\beta}}
\end{equation*}
for $\beta$-H\"older continuous functions $f$. So, we also obtain that
\begin{align*}
\vert\vert B_{\bigcdot}^H\vert\vert_{\beta}(\tau_0-t)^{\beta}&\ge \vert B^H_{\tau_0}-B^H_t\vert \ge \int^{\tau_0}_tb(s,x_s)ds\\
&\ge h_1\int^{\tau_0}_t s^{2H-1}x_s^{-\alpha}ds\ge \frac{h_1}{\vert\vert B_{\bigcdot}^H\vert\vert_{\beta}^{\alpha}}\int^{\tau_0}_t s^{2H-1}\frac{1}{(\tau_0-s)^{\alpha\beta}}ds\\
&\ge \frac{h_1}{\vert\vert B_{\bigcdot}^H\vert\vert_{\beta}^{\alpha}}\tau_0^{2H-1}\int^{\tau_0}_t \frac{1}{(\tau_0-s)^{\alpha\beta}}ds.
\end{align*}
\end{enumerate}
If $\alpha \beta\ge 1$, we get a contradiction. For $\alpha \beta< 1$, we find that
\begin{align*}
\vert\vert B_{\bigcdot}^H\vert\vert_{\beta}(\tau_0-t)^{\beta}&\ge \frac{h_1}{\vert\vert B_{\bigcdot}^H\vert\vert_{\beta}^{\alpha}}\tau_0^{2H-1} \frac{(\tau_0-t)^{1-\alpha\beta}}{1-\alpha\beta},\quad t\in (\hat{\tau}_0,\tau_0].
\end{align*}
Hence,
\begin{align*}
0=\lim_{t\rightarrow \tau_0}\vert\vert B_{\bigcdot}^H\vert\vert_{\beta}(\tau_0-t)^{\beta+\alpha\beta-1}\ge \frac{h_1\tau_0^{2H-1}}{\vert\vert B_{\bigcdot}^H\vert\vert_{\beta}^{\alpha}(1-\alpha\beta)}>0.
\end{align*}
So $\tau_0=1^+$. Assume now that 
\begin{align*}
\tau_{\infty}:=\lim_{n\rightarrow \infty}\tau_n\le 1.
\end{align*} 

Then we can show as in \cite{Zhang} by using (A3) that 
\begin{align*}
x_t\le x_2+x_0+\vert\vert B_{\bigcdot}^H\vert\vert_{\beta}\tau^{\beta}_{\infty}+h_2\tau^{2H}_{\infty}(2H)^{-1}+h_2\int_{\hat{\tau}_1}^t s^{2H-1}x_sds.
\end{align*}
So, by letting 
\begin{align*}
\alpha= x_2+x_0+\vert\vert B_{\bigcdot}^H\vert\vert_{\beta}\tau^{\beta}_{\infty}+h_2\tau^{2H}_{\infty}(2H)^{-1},
\end{align*}
it follows from Gronwall's Lemma that 
\begin{align*}
x_t&\le \alpha+\int_{\hat{\tau}_1}^t \alpha h_2s^{2H-1}\exp\Big(\int_s^th_2u^{2H-1}du\Big)ds\\
 &\le  \gamma+\int_0^1 \gamma h_2s^{2H-1}\exp\Big(\int_s^1 h_2u^{2H-1}du\Big)ds,
\end{align*}
where $\gamma:=x_2+x_0+\vert\vert B_{\bigcdot}^H\vert\vert_{\beta}+\frac{h_2}{2H}$. The latter estimate leads to a contradiction.
\end{proof}
As a consequence of Theorem \ref{thrm1}, we obtain the following result:
\begin{corollary}\label{corona}
Suppose that $x\in (0,\infty)$ and $\rho>\frac{1}{H\tilde{\theta}}+1$. Then there exists a unique strong solution $y_t>0$ to \textup{SDE} \eqref{sec2:eq1}.
\end{corollary}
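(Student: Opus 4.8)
The plan is to obtain Corollary~\ref{corona} as an immediate application of Theorem~\ref{thrm1}: the SDE~\eqref{sec2:eq1} is precisely~\eqref{sec2:eq3} with drift $b(s,y)=\tilde f(s,y)$ as in~\eqref{sec2:eq2}, diffusion coefficient $\sigma=\tilde\sigma>0$, and driving noise $-B^H$, which is again a fractional Brownian motion with the same Hurst parameter $H$ (and which the pathwise argument behind Theorem~\ref{thrm1} treats in exactly the same way, since only the $\beta$-Hölder regularity of the path for $\beta\in(0,H)$ is ever used). Everything therefore reduces to checking that $\tilde f$ satisfies (A1)--(A3). Throughout I would exploit that $2H-1<0$, so that $s^{2H-1}\ge 1$ on $(0,1]$, together with the standing assumptions $\tilde\theta>0$, $H<\tfrac12$ and $\rho>1+\tfrac1{H\tilde\theta}$, which combine to give the exponent inequalities $\tilde\theta(\rho-1)>\tfrac1H>2$.

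For (A1): $\tilde f$ is visibly $C^1$ on $(0,1)\times(0,\infty)$, and in $\partial_y\tilde f$ the two $y$-increasing monomials (coming from the $\alpha_{-1}$ and $\alpha_0$ terms) form, in the variable $y^{\tilde\theta}$, a downward parabola and are hence bounded above by a constant; the two $s^{2H-1}$-weighted monomials (from the $\alpha_2$ term and from $-\tilde\sigma Hs^{2H-1}y^{-1}(\tilde\theta+1)$) carry exponents $-\tilde\theta(\rho-1)<-2$ and $-2$, and since $\tilde\theta(\rho-1)>1$ the dominant one has a negative coefficient, so their sum is again bounded above near $y=0$ and near $y=\infty$; using $s^{2H-1}\ge1$ to absorb the unweighted constants then yields $\partial_y\tilde f(s,y)\le Ks^{2H-1}$ for some $K\ge0$.

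For (A2): near $y=0$ the monomial $\alpha_2 s^{2H-1}\tilde\theta^{-\rho}y^{-\tilde\theta(\rho-1)+1}$ dominates all the others, its exponent $-\tilde\theta(\rho-1)+1$ being the most negative and its coefficient positive; bounding the remaining terms crudely (and again using $s^{2H-1}\ge1$) gives $\tilde f(s,y)\ge h_1 s^{2H-1}y^{-\alpha}$ for $y$ below some $x_1>0$, with $h_1=\tfrac{\alpha_2}{4\tilde\theta^{\rho}}>0$ and $\alpha=\tilde\theta(\rho-1)-1$; the requirement $\alpha>\tfrac1H-1$ of (A2) is exactly the hypothesis $\rho>1+\tfrac1{H\tilde\theta}$. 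For (A3): for large $y$ the term $-\alpha_{-1}\tilde\theta y^{2\tilde\theta+1}$ dominates and is negative, while for $y\ge1$ the $s^{2H-1}$-weighted negative-power monomials are bounded, so $\tilde f(s,y)\le h_2 s^{2H-1}(y+1)$ holds for $y\ge x_2:=1$. With (A1)--(A3) established, Theorem~\ref{thrm1} produces the unique positive strong solution $y_t$, $0\le t\le1$, of~\eqref{sec2:eq1} for every initial value $x=y_0>0$.

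The bulk of the work, and the only genuinely delicate point, is the exponent bookkeeping: identifying in each of (A1)--(A3) the dominant monomial and the sign of its coefficient, and above all the sharp matching in (A2) of the singular exponent $\alpha=\tilde\theta(\rho-1)-1$ against the threshold $\tfrac1H-1$ — this is where the precise lower bound on $\rho$ is consumed, and the step I would carry out most carefully. Everything else is routine estimation.
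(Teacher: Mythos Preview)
Your proposal is correct and follows essentially the same approach as the paper: verify (A1)--(A3) for $\tilde f$ and invoke Theorem~\ref{thrm1}, observing that $-B^H$ is again a fractional Brownian motion so the theorem applies verbatim. The only cosmetic difference is that for (A2) the paper introduces an auxiliary splitting $\tilde f=\tilde g_1+\tilde g_2$ (via a parameter $\epsilon=H/2$) to isolate the singular part, rather than bounding the non-singular terms directly as you do; both routes arrive at the same singular exponent $\alpha=\tilde\theta(\rho-1)-1$ and its matching against the threshold $\tfrac1H-1$.
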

\begin{proof}
Let $\epsilon=\frac{H}{2}$. Then 
\begin{equation*}
\tilde{f}(s,y)=\tilde{g}_1(s,y)+\tilde{g}_2(s,y),
\end{equation*}
where 
\begin{equation*}
\tilde{g}_1(s,y):=\alpha_{-1}(-\tilde{\theta} y^{2\tilde{\theta}+1})+\alpha_0y^{\tilde{\theta}+1}-\alpha_1\frac{y}{\tilde{\theta}}+\epsilon\tilde{\sigma}s^{2H-1}y^{-1}(\tilde{\theta}+1),
\end{equation*}
and
\begin{equation*}
\tilde{g}_2(s,y):=\alpha_2 s^{2H-1}\frac{1}{\tilde{\theta}^{\rho}} y^{-\tilde{\theta}\rho+\tilde{\theta}+1}-(H+\epsilon)\tilde{\sigma}s^{2H-1}y^{-1}(\tilde{\theta}+1).
\end{equation*}
We see that 
\begin{align*}
\tilde{g}_1(s,y)&\ge \alpha_{-1}(-\tilde{\theta} y^{2\tilde{\theta}+1})+\alpha_0y^{\tilde{\theta}+1}-\alpha_1\frac{y}{\tilde{\theta}}+\epsilon\tilde{\sigma}y^{-1}(\tilde{\theta}+1)\\
&\ge 0
\end{align*}
for all $s\in (0,1]$ and $y\in(0,y_0)$ for some $y_0>0$. Since $$-\tilde{\theta}\rho+\tilde{\theta}+1<-\frac{1}{H}+1<-1,$$ we also find some $y_1>0$ such that
\begin{align*}
\tilde{g}_2(s,y)&=s^{2H-1} y^{-\tilde{\theta}\rho+\tilde{\theta}+1}\Big(\alpha_2\frac{1}{\tilde{\theta}^{\rho}}-(H+\epsilon)\tilde{\sigma}(\tilde{\theta}+1)y^{\tilde{\theta}\rho-\tilde{\theta}-2}\Big)\\
&\ge h_1s^{2H-1}y^{-\alpha}
\end{align*}
for all $s\in (0,1]$ and $y\in(0,y_1]$, where $h_1>0$ and $\alpha:=\tilde{\theta}\rho-\tilde{\theta}-1$.
So,
\begin{align*}
\tilde{f}(s,y)\ge h_1s^{2H-1}y^{-\alpha}
\end{align*}
for all $s\in (0,1]$, $y\in(0,y_1)$  for some $y_1>0$, which shows that $\tilde{f}$ satisfies (A2). As for (A3), we see that there exists some $y_2\ge 1$ such that 
\begin{align*}
\tilde{f}(s,y)&\le s^{2H-1}\Big(\alpha_2\frac{1}{\tilde{\theta}^{\rho}}y^{-\tilde{\theta}\rho+\tilde{\theta}+1}-H\tilde{\sigma}y^{-1}(\tilde{\theta}+1)\Big)\le h_2s^{2H-1}(1+y)
\end{align*}
for all $s\in (0,1]$, $y\in(y_2,\infty)$ and some $h_2>0$. We have that 
\begin{align*}
\tilde{f}^{\prime}(s,y)=f_1(s,y)+f_2(s,y),
\end{align*}
where 
\begin{align*}
f_1(s,y):=-\alpha_{-1}\tilde{\theta}(2\tilde{\theta}+1)y^{2\tilde{\theta}}+\alpha_0(\tilde{\theta}+1)y^{\tilde{\theta}}-\frac{\alpha_1}{\tilde{\theta}}
\end{align*}
and 
\begin{align*}
f_2(s,y):= s^{2H-1}\Big(\alpha_2\frac{1}{\tilde{\theta}^{\rho}}(-\tilde{\theta}\rho+\tilde{\theta}+1)y^{-\tilde{\theta}\rho+\tilde{\theta}}+H\tilde{\sigma}(\tilde{\theta}+1)y^{-2}\Big).
\end{align*}
So, there exist $y_1,y_2>0$ such that 
\begin{align*}
\tilde{f}'(s,y)\le f_1(s,y)\le K\le s^{2H-1}K=K_s
\end{align*}
for all $s\in (0,1]$, $y\in(0,y_1)$ as well as 
\begin{align*}
\tilde{f}'(s,y)\le f_2(s,y)\le s^{2H-1}K=K_s
\end{align*}
for all $s\in(0,1]$, $y\in(y_2,\infty)$ and some $K>0$. On the other hand, we see that
\begin{align*}
\tilde{f}'(s,y)\le K_1+s^{2H-1}K_2\le s^{2H-1}K=K_s
\end{align*}
for all $s\in (0,1]$, $y_0\in[y_1,y_2]$ for some $K_1,K_2,K>0$. Altogether, we see that $\tilde{f}$ also satisfies (A1). Since $-B^H_{\bigcdot}$ is a fractional Brownian motion, the proof follows.
\end{proof}

\section{Malliavin differentiability and existence of higher moments of solutions}\label{sec5}
In this section, we want to show that the solution $x$ to the SDE
\begin{equation}\label{10}
x_t=x+\int_0^t \tilde{f}(s,x_s)ds-\tilde{\sigma}B_t^H, \quad 0\le t\le 1, x>0,
\end{equation}
is Malliavin differentiable in the direction of $B_{\bigcdot}^H$ for $H\in(0,\frac{1}{2})$. Furthermore, we verify that solutions $x_t$ to \eqref{10} belong to $L^q$ for all $q\ge 1$. For this purpose, let $\tilde{f}_n:(0,1]\times \mathbb{R}\rightarrow \mathbb{R}$, $n\ge 1$ be a sequence of bounded, globally Lipschitz continuous (and smooth) functions such that
\begin{enumerate}
\item $\tilde{f}\mid_{[\frac{1}{n},n]}=\tilde{f}\mid_{(0,1]\times [\frac{1}{n},n]}$ for all $n\ge 1$,\\
\item $\tilde{f}'_n(s,x)\le K_s$ for all $(s,x)\in (0,1]\times \mathbb{R}$, $n\ge 1$, where $K_s$ is defined in (A1).
\end{enumerate}
So we see that
\begin{equation*}
\tilde{f}'_n(s,x)\underset{n\rightarrow \infty}{\longrightarrow}\tilde{f}(s,x)
\end{equation*}
for all $(s,x)\in (0,1]\times (0,\infty)$. Denote by $D^H_{\bigcdot}$ and $D_{\bigcdot}$, the Malliavian derivative in the direction of $B^H_{\bigcdot}$ and $W_{\bigcdot}$, respectively. Here $W_{\bigcdot}$ is the Wiener process with respect to the representation
\begin{equation}\label{new0}
B^H_t=\int^t_0K_H(t,s)dW_s,\quad t\ge 0.
\end{equation}
See the Appendix. Since $-B^H_{\bigcdot}$ is a fractional Brownian motion, let us without loss of generality assume in \eqref{10} that $\tilde{\sigma}=-1$. Because of the regularity of the functions $\tilde{f}_n$, $n\ge 1$, we find that the solutions $x^n_{\bigcdot}$ to 
\begin{equation*}
x^n_t=x+\int_0^t \tilde{f}_n(s,x_s)ds+B_t^H, \quad x>0, \quad 0\le t\le 1
\end{equation*}
are Malliavin differentiable with Malliavin derivative $D_u^Hx_t$ satisfying the equation
\begin{equation*}
D_u^Hx_t^n=\int^t_u \tilde{f}^{\prime}_n(s,x^n_s)D_u^Hx_s^nds+\chi_{[0,t]}(u).
\end{equation*}
Hence,
\begin{equation*}
D_u^Hx^n_t=\chi_{[0,t]}(u)\exp\Big(\int_u^t\tilde{f}_n^{\prime}(s,x_s^n )ds \Big)\quad \lambda\times \text{P-a.e.}
\end{equation*}
for all $0\le t\le 1$ ( $\lambda$ Lebesgue measure). See \cite{Nualart}.
Further, using the transfer principle between $D^H_{\bigcdot}$ and $D_{\bigcdot}$, we have that
\begin{equation}\label{new1}
K_H^*D^H_{\bigcdot}x_t=D_{\bigcdot}x_t
\end{equation}
where $K_H^*:\mathcal{H}\rightarrow L^2([0,T])$ is given by
\begin{equation}
(K_H^*y)(s)=K_H(T,s)y(s)+\int_s^T(y(t)-y(s))\frac{\partial}{\partial t}K_H(t,s)dt
\end{equation}
for
\begin{equation}
\frac{\partial}{\partial t}K_H(t,s)=c_H\Big(H-\frac{1}{2}\Big)\Big(\frac{1}{2}\Big)^{H-\frac{1}{2}}\Big(t-s\Big)^{H-\frac{3}{2}}.
\end{equation}
Here $\mathcal{H}=I_{T^-}^{\frac{1}{2}-H}(L^2)$. See the Appendix. On the other hand, using \eqref{new1}, we also see that
\begin{equation}
D_ux^n_t=\int_u^t \tilde{f}'_n(s,x_s^n)D_ux_s^nds+K_H(t,u)
\end{equation}
in $L^2([0,t]\times \Omega)$ for all $0\le t\le 1$. Set
\begin{equation*}
Y_t^n(u)=D_ux_t^n-K_H(t,u).
\end{equation*}
Then, 
\begin{equation*}
Y_t^n(u)=\int_u^t\Big\{\tilde{f}'_n(s,x_s^n)Y^n_s(u)+\tilde{f}_n(s,x_s^n)K_H(s,u) \Big\}ds.
\end{equation*}
Using the fundamental solution of the equation
\begin{equation*}
\dot{\Phi}(t)=\tilde{f}'_n(t,x_t^n)\cdot\Phi(t),\quad \Phi(u)=1.
\end{equation*}
We then obtain that 
\begin{equation*}
Y_t^n(u)=\int_u^t\exp\Big(\int_s^t\tilde{f}'(r,x_r^n)dr\Big)\tilde{f}'_n(s,x_s^n)K_H(s,u)ds.
\end{equation*}
Hence,
\begin{align*}
D_ux_t^n&=\int^t_u \exp\Big(\int_s^t\tilde{f}'(r,x_r^n)dr\Big)\tilde{f}'_n(s,x_s^n)K_H(s,u)ds+K_H(t,u)\\
&= J_1^n(t,u)+J_2^n(t,u)+K_H(t,u), \quad u<t,\quad \lambda\times\text{P-a.e.},
\end{align*}
where
\begin{align*}
J_1^n(t,u)&:=\int_u^t\exp\Big(\int_s^t \tilde{f}'(r,x_r)dr\Big)\Big( \tilde{f}'_n(s,x_s^n)-K_s\Big)K_H(s,u)ds
\end{align*}
and
\begin{align*}
J_2^n(t,u)&:=\int_u^t\exp\Big(\int_s^t \tilde{f}'(r,x_r)dr\Big)K_s\cdot K_H(s,u)ds.
\end{align*}
Without loss of generality, let $T=t=1$. Then 
\begin{equation}
\int_0^1(D_ux_1^n)^2du\le C\Big\{\int_0^1(J^n_1(1,u))^2du+\int_0^1(J^n_2(1,u))^2du+\int_0^1(K_H(1,u))^2du \Big\}.
\end{equation}
Using Fubini's theorem, we get that 
\begin{align*}
\int_0^1(J^n_1(1,u))^2du&=\int_0^1\Big(\int_0^1\chi_{_{[u,1]}}(s)\exp\Big(\int_s^t \tilde{f}'(r,x_r)dr\Big)\Big( \tilde{f}'_n(s,x_s^n)-K_s\Big)K_H(s,u)ds\Big)^2du\\
&=\int_0^1\int_0^1 \Big\{\exp\Big(\int_{s_1}^1\tilde{f}'(r,x_r)dr\Big)\Big( \tilde{f}'_n(s_1,x_{s_1}^n)-K_{s_1}\Big)\Big)\\
&\times \exp\Big(\int_{s_2}^1\tilde{f}'(r,x_r)dr\Big)\Big( \tilde{f}'_n(s_2,x_{s_2}^n)-K_{s_2}\Big)\int_0^{s_1\wedge s_2} K_H(s_1,u)K_H(s_2,u)du\Big)\Big\}ds_1ds_2.
\end{align*}
From \eqref{new0}, we see for the covariance function 
\begin{align*}
R_H(s_1,s_2)=\mathbb{E}[B_{s_1}^H\cdot B_{s_2}^H]
\end{align*}
that 
\begin{align*}
R_H(s_1,s_2)=\int_0^{s_1\wedge s_2}K_H(s_1,u)K_H(s_2,u)du.
\end{align*}
Since 
\begin{align*}
0\le R_H(s_1,s_2)=\frac{1}{2}(s_1^{2H}+s_2^{2H}-\vert s_1-s_2\vert^{2H})\le 1,\quad H< \frac{1}{2}
\end{align*}
and 
\begin{align*}
\Big( \tilde{f}'_n(s_1,x_{s_1}^n)-K_{s_1}\Big)\cdot \Big( \tilde{f}'_n(s_2,x_{s_2}^n)-K_{s_2}\Big)\ge 0
\end{align*}
for $0< s_1,s_2\le 1$, we find that 
\begin{align*}
\int_0^1(J^n_1(1,u))^2du&\le \Big(\int_0^1\Big(\exp\Big(\int_s^t \tilde{f}'(r,x_r)dr\Big)\Big( \tilde{f}'_n(s,x_s^n)-K_s\Big)ds\Big)^2\\
&=\Big\{-\exp\Big(\int_s^1\tilde{f}'(r,x_r)dr\Big)\Big\vert_{s=0}^1-\int_0^1K_s\exp\Big(\int_s^1\tilde{f}'(r,x_r)dr\Big)ds\Big\}^2\\
&\le \Big(\exp(\int_0^1K_rdr)+\int_0^1K_sds\cdot\exp(\int_0^1K_rdr)\Big)^2.
\end{align*}
Similarly, we also obtain that
\begin{align*}
\int_0^1(J^n_2(1,u))^2du&\le C(K,H)
\end{align*}
for a constant $C(K,H)<\infty$. We also have that 
\begin{align*}
\int_0^1(K_H(1,u))^2du=\mathbb{E}[(B_1^H)^2]=1.
\end{align*}
Altogether, we get that 
\begin{equation}\label{boom}
\mathbb{E}[\int_0^1(D_ux_1^n)^2du]\le C(K,H)
\end{equation}
for all $n\ge 1$ for a constant $C(K,H)<\infty$. Define now the stopping times $\tau_n$ by
\begin{align*}
\tau_n=\inf\Big\{0\le t\le 1; x_t\notin [\frac{1}{n},n]\Big\}\quad (\inf\emptyset=\infty)
\end{align*}
Then we know from the proof of the existence of solutions in the previous section that $\tau_n\nearrow \infty$ for $n\rightarrow\infty$. So,
\begin{align*}
x^n_{t\wedge \tau_n}-x_{t\wedge \tau_n}&=\int_0^{t\wedge \tau_n}\Big\{\tilde{f}_n(s,x_s^n)-\tilde{f}(s,x_s)\Big\}ds\\
&=\int_0^t \chi_{_{[0,\tau_n)}}(s)\Big\{ \tilde{f}_n(s,x_{s\wedge\tau_n}^n)-\tilde{f}_n(s,x_{s\wedge\tau_n})\Big\}ds.
\end{align*}
Hence,
\begin{align*}
\vert x^n_{t\wedge \tau_n}-x_{t\wedge \tau_n}\vert &\le K_n\int^t_0\vert
x^n_{s\wedge \tau_n}-x_{s\wedge \tau_n}\vert ds
\end{align*}
for a Lipschitz constant $K_n$. Then Gronwall's Lemma implies that
\begin{equation*}
x^n_{t\wedge \tau_n}=x_{t\wedge \tau_n}
\end{equation*}
for all $t,n$ P-a.e. Since $\tau_n\nearrow \infty$ for $n\rightarrow\infty$ a.e. we have that
\begin{equation}\label{boom1}
x_t^n\underset{n\rightarrow \infty}{\rightarrow} x_t
\end{equation}
for all t P-a.e. Using the Clark-Ocone formula (see \cite{Nualart}), we get that 
\begin{align*}
x^n_1=\mathbb{E}[x^n_1]+\int_0^1\mathbb{E}[D_sx^n_1\vert \mathcal{F}_s]dW_s,
\end{align*}
where $\{\mathcal{F}\}_{0\le t\le 1}$ is the filtration generated by $W_{\bigcdot}$. It follows that 
\begin{align*}
\mathbb{E}[(x_1^n-\mathbb{E}[x^n_1])^2]
&=\mathbb{E}\Big[\int_0^1\Big(\mathbb{E}[D_sx^n_1\vert \mathcal{F}_s]\Big)^2ds\Big]\\
&\le\mathbb{E}\Big[\int_0^1\mathbb{E}[(D_sx^n_1)^2\vert \mathcal{F}_s]ds\Big]=\int_0^1\mathbb{E}[(D_sx^n_1)^2]ds.
\end{align*}
So, we see from \eqref{boom} that
\begin{align*}
\mathbb{E}[(x_1^n-\mathbb{E}[x^n_1])^2]\le C(K,H)< \infty.
\end{align*}
for all $n\ge 1$.  We also have that
\begin{align*}
\big\vert\vert x_1^n-\mathbb{E}[x^n_1]\vert-\vert x_1-\mathbb{E}[x^n_1]\vert \big\vert\le \vert x^n_1-x_1\vert\underset{n\rightarrow \infty}{\longrightarrow} 0
\end{align*}
because of \eqref{boom1}.  So,
\begin{align*}
\underset{n\rightarrow \infty}{\varliminf}\vert x_1^n-\mathbb{E}[x^n_1]\vert=\underset{n\rightarrow \infty}{\varliminf}\vert x_1-\mathbb{E}[x^n_1]\vert.
\end{align*}
Suppose that $\mathbb{E}[x_1^n]$, $n\ge 1$ is unbounded. Then there exists a subsequence $n_k$, $k\ge 1$ such that 
\begin{align*}
\vert\mathbb{E}[x^{n_k}_1]\vert\underset{n\rightarrow \infty}{\longrightarrow} \infty.
\end{align*}
It follows from the Lemma of Fatou and the positivity of $x_t$ that 
\begin{align*}
\infty&=\mathbb{E}\Big[\underset{k\rightarrow \infty}{\varliminf}\Big(\big\vert x_1-\vert\mathbb{E}[x^{n_k}_1]\vert\big\vert\Big)^2\Big]\\
&\le \mathbb{E}\Big[\underset{k\rightarrow \infty}{\varliminf}\Big(\big\vert x_1-\mathbb{E}[x^{n_k}_1]\big\vert\Big)^2\Big]\\
&=\mathbb{E}\Big[\underset{k\rightarrow \infty}{\varliminf}\Big(\big\vert x^{n_k}_1-\mathbb{E}[x^{n_k}_1]\big\vert\Big)^2\Big]\\
&\le \underset{k\rightarrow \infty}{\varliminf}\mathbb{E}\Big[\big\vert x^{n_k}_1-\mathbb{E}[x^{n_k}_1]\big\vert^2\Big]\le C<\infty,
\end{align*}
which is a contradiction. Hence,
\begin{align*}
\sup_{n\ge 1}\vert \mathbb{E}[x_1^n]\vert<\infty.
\end{align*}
Further, we also obtain from the Burkholder-Davis-Gundy inequality and \eqref{boom} that
\begin{align}\label{estim}
\mathbb{E}[\vert x_1^n\vert^{2p}]&<C_p\Big(\vert\mathbb{E}[x_1^n]\vert^{2p}+\mathbb{E}\Big[\big(\int_0^1 \mathbb{E}[D_sx_1^n\vert \mathcal{F}_s]dW_s\big)^{2p}\Big] \Big)\nonumber\\
&\le C_p\Big(\vert\mathbb{E}[x_1^n]\vert^{2p}+\mathbb{E}\Big[\big(\sup_{0\le u\le 1}\big\vert\int_0^u \mathbb{E}[D_sx_1^n\vert \mathcal{F}_s]dW_s\big\vert\big)^{2p}\Big]\Big)\nonumber\\
&\le C_p\Big(\vert\mathbb{E}[x_1^n]\vert^{2p}+m_p\mathbb{E}\Big[(\int_0^1 \mathbb{E}[D_sx_1^n\vert \mathcal{F}_s])^2ds\big)^{p}\Big]\Big)\nonumber\\
&\le C(p,K,H)
\end{align}
for $n\ge 1$. So it follows from \eqref{boom1} and the Lemma of Fatou that
\begin{align*}
\mathbb{E}[\vert x_1\vert^{2p}]\le \underset{n\rightarrow \infty}{\varliminf}\mathbb{E}[\vert x_1^n\vert]^{2p}\le C(p,K,H)<\infty
\end{align*}
for all $p\ge 1$.
So we obtain the following result:
\begin{theorem}\label{highermoment}
Let $x_t, 0\le t\le 1$ be the solution to \eqref{10}. Then $x_t\in L^q(\Omega)$ for all $q\ge 1$ and $0\le t\le 1$.
\end{theorem}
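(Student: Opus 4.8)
The plan is to deduce $x_t\in L^q(\Omega)$ by passing to the limit along the smooth approximations $x^n_{\bigcdot}$ introduced above, using the uniform-in-$n$ estimates that the preceding computation has prepared. Fix a terminal time $t\in(0,1]$ and repeat the Clark--Ocone/Malliavin analysis carried out for $t=1$, now with $T=t$. The only point that needs rechecking is that the central bound
\begin{equation*}
\mathbb{E}\Big[\int_0^t (D_ux_t^n)^2\,du\Big]\le C(K,H)
\end{equation*}
holds with a constant independent of both $n$ and $t$; this is the case because $0\le R_H(s_1,s_2)\le 1$, the factor $\big(\tilde f_n'(s_1,x_{s_1}^n)-K_{s_1}\big)\big(\tilde f_n'(s_2,x_{s_2}^n)-K_{s_2}\big)$ is nonnegative by property~(ii) of the $\tilde f_n$, and $s\mapsto K_s=Ks^{2H-1}$ is integrable on $[0,1]$.

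From here I would proceed exactly as in \eqref{estim}: the Clark--Ocone formula $x_t^n=\mathbb{E}[x_t^n]+\int_0^t\mathbb{E}[D_sx_t^n\mid\mathcal F_s]\,dW_s$ together with conditional Jensen bounds the variance $\mathbb{E}\big[(x_t^n-\mathbb{E}[x_t^n])^2\big]$ by $C(K,H)$, uniformly in $n$; the positivity of $x_{\bigcdot}$ and a Fatou argument applied along an arbitrary subsequence of $\mathbb{E}[x_t^n]$ then force $\sup_{n\ge1}|\mathbb{E}[x_t^n]|<\infty$; and feeding both facts into the Burkholder--Davis--Gundy inequality yields
\begin{equation*}
\sup_{n\ge1}\mathbb{E}\big[|x_t^n|^{2p}\big]\le C(p,K,H)<\infty
\end{equation*}
for every integer $p\ge1$. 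Finally the localization identity $x^n_{s\wedge\tau_n}=x_{s\wedge\tau_n}$ and $\tau_n\nearrow\infty$ give $x_t^n\to x_t$ $\mathbb{P}$-a.s., so Fatou's lemma delivers $\mathbb{E}\big[|x_t|^{2p}\big]\le\varliminf_{n\to\infty}\mathbb{E}\big[|x_t^n|^{2p}\big]\le C(p,K,H)<\infty$. Since $\mathbb{P}$ is a probability measure we have $L^{2p}(\Omega)\subset L^q(\Omega)$ whenever $q\le 2p$, and $2p$ may be taken arbitrarily large, so $x_t\in L^q(\Omega)$ for all $q\ge1$ and all $t\in[0,1]$.

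The only genuinely delicate ingredient is the uniform Malliavin-derivative bound \eqref{boom}: it is precisely the one-sided hypothesis (A1), $\tilde f_n'\le K_s$, combined with the nonnegativity of the fractional covariance $R_H$, that makes the cross term in $\int_0^t(D_ux_t^n)^2\,du$ nonnegative and thereby allows the whole stochastic integral to be dominated by the square of a single deterministic integral in $K_s$; a mere two-sided Lipschitz bound would not yield this decoupling. Everything else is a routine limiting argument through Fatou's lemma.
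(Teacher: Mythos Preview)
Your proposal is correct and follows essentially the same route as the paper: the paper likewise works at a fixed terminal time (taking $T=t=1$ ``without loss of generality''), combines the one-sided bound $\tilde f_n'\le K_s$ with $0\le R_H\le 1$ to obtain the uniform Malliavin estimate \eqref{boom}, uses the localization $x^n_{s\wedge\tau_n}=x_{s\wedge\tau_n}$ and $\tau_n\nearrow\infty$ for a.s.\ convergence, runs the Clark--Ocone\,/\,Fatou contradiction to bound $\sup_n|\mathbb{E}[x_t^n]|$, and finishes with BDG and Fatou exactly as you describe. No substantive difference.
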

In addition, we obtain from Lemma 1.2.3 in \cite{Nualart} in connection with the estimate \eqref{estim} that $x_1$ is Malliavin differentiable in the direction of $W_{\bigcdot}$. The latter, in combination with \eqref{new1}, also entails the Malliavin differentiability of $x_1$ with respect to $B^H_{\bigcdot}.$ Thus we have  also shown the following result:
\begin{theorem}\label{Thrm 4.2}
The positive unique strong solution $x_t$ to \eqref{10} is Malliavin differentiable in the direction of $B^H_{\bigcdot}$ and $W_{\bigcdot}$ for all $0\le t\le 1$.
\end{theorem}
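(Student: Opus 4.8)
The plan is to obtain Theorem \ref{Thrm 4.2} as a direct consequence of the approximation scheme and the a priori estimates established above, via a standard closability argument for the Malliavin derivative together with the transfer principle \eqref{new1}. First I would recall that the smooth, globally Lipschitz truncations $\tilde{f}_n$ produce solutions $x^n_{\bigcdot}$ that are Malliavin differentiable with the explicit derivative $D_ux^n_t=J_1^n(t,u)+J_2^n(t,u)+K_H(t,u)$, and that the sign condition $(\tilde{f}'_n(s_1,x^n_{s_1})-K_{s_1})(\tilde{f}'_n(s_2,x^n_{s_2})-K_{s_2})\ge 0$ combined with $0\le R_H(s_1,s_2)\le 1$ (valid because $H<\frac12$) yields the uniform bound \eqref{boom}, i.e. $\sup_{n\ge 1}\mathbb{E}\big[\int_0^1(D_ux_1^n)^2du\big]\le C(K,H)<\infty$. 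Next I would record that by \eqref{boom1} we have $x_1^n\to x_1$ P-a.e., and that the uniform moment bound \eqref{estim} (which also yields Theorem \ref{highermoment}) upgrades this to convergence in $L^2(\Omega)$, since $\{(x_1^n)^2\}_n$ is uniformly integrable by the uniform $L^{2p}$-bound for a fixed $p>1$.

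With these two facts in hand, I would apply the limit criterion for Malliavin differentiability (Lemma 1.2.3 in \cite{Nualart}): if $F_n\to F$ in $L^2(\Omega)$ and $\sup_n\mathbb{E}[\|D_{\bigcdot}F_n\|^2]<\infty$, then $F$ lies in the domain of $D_{\bigcdot}$ and $D_{\bigcdot}F_n\to D_{\bigcdot}F$ weakly in $L^2(\Omega\times[0,1])$. Taking $F_n=x_1^n$ and $F=x_1$, this shows that $x_1$ is Malliavin differentiable in the direction of $W_{\bigcdot}$. Then, via the transfer principle $K_H^*D^H_{\bigcdot}x_1=D_{\bigcdot}x_1$ of \eqref{new1} and the fact that $K_H^*$ is an isometry between $\mathcal{H}=I_{T^-}^{\frac12-H}(L^2)$ and $L^2([0,T])$, differentiability in the direction of $W_{\bigcdot}$ transfers to differentiability in the direction of $B^H_{\bigcdot}$. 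Finally, to pass from $t=1$ to an arbitrary $t\in[0,1]$ I would simply rerun the argument on the sub-interval $[0,t]$: the estimates \eqref{boom}, \eqref{boom1} and \eqref{estim} are monotone in the upper integration limit and hence hold uniformly in $t$, so the same limit criterion and transfer principle apply verbatim with $1$ replaced by $t$.

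The genuinely delicate point is not in this final assembly but in the a priori bound \eqref{boom}: because $\tilde{f}$ is singular at $0$ and super-linear at $\infty$, the derivatives $\tilde{f}'_n$ are unbounded, so one cannot control $D_ux^n_t$ by naively estimating $\exp\big(\int_s^t\tilde{f}'_n\big)$ against the kernel $K_H$. The resolution --- splitting off $J_2^n$ with the linear majorant $K_s=s^{2H-1}K$ and exploiting that the remaining integrand $\tilde{f}'_n-K_s$ has a definite sign, so that the covariance $R_H$ can be bounded above by $1$ and the double integral collapses to the square of a telescoping single integral --- is exactly the mechanism that makes the whole approach work, and it is where the restriction $H<\frac12$ enters. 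Granting \eqref{boom} and \eqref{estim}, the remaining steps are routine applications of well-known facts from Malliavin calculus.
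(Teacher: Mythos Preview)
Your proposal is correct and follows essentially the same route as the paper: the paper likewise combines the uniform bound \eqref{boom} on $\mathbb{E}\!\int_0^1(D_ux_1^n)^2\,du$ with the a.e.\ convergence \eqref{boom1} and the uniform $L^{2p}$-estimate \eqref{estim}, then invokes Lemma~1.2.3 in \cite{Nualart} to obtain Malliavin differentiability of $x_1$ with respect to $W_{\bigcdot}$ and transfers to $B^H_{\bigcdot}$ via \eqref{new1}. Your explicit remark that \eqref{estim} upgrades a.e.\ convergence to $L^2$-convergence by uniform integrability, and your observation that the whole argument reruns on $[0,t]$ for general $t\in[0,1]$, make transparent two steps the paper leaves implicit.
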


\section{Application}\label{sec6}
In this section, we aim at applying the results of the previous section to obtain a unique strong solution to $x_t$ to the SDE 
\begin{equation}\label{app1}
x_t=x_0+\int_0^t\big(\alpha_{-1}x^{-1}_s-\alpha_0+\alpha_1x_s-\alpha_2s^{2H-1}x_s^{\rho}\big)ds+\int_0^t\sigma x_s^{\theta}dB_s^H,
\end{equation}
$0\le t\le 1$, for $H\in(\frac{1}{3},\frac{1}{2})$, $\tilde{\theta}>0$, $\rho>1+\frac{1}{H\tilde{\theta}}$, $\sigma>0$ and $\theta:=\frac{\tilde{\theta}+1}{\tilde{\theta}}$. Here, the stochastic integral with respect to $B_{\bigcdot}^H$ is defined by
\begin{equation}\label{app2}
\int_0^tg(x_s)dB^H_s=\int_0^t-Hs^{2H-1}g^{\prime}(x_s)ds+\int_0^tg(x_s)d^{\circ}B_s^H
\end{equation}
for functions $g\in \mathcal{C}^3$. See also the second Remark \ref{appremark} below. The stochastic integral on the right hand side of \eqref{app2} is the symmetric integral with respect to $B^H_{\bigcdot}$ introduced by F. Russo, P. Vallois. See e.g.  \cite{Errami} and the references therein. Such an integral denoted by 
\begin{equation}
\int_0^tY_sd^{\circ}X_s, \quad t\in [0,1]
\end{equation}
for continuous process $X_{\bigcdot}$, $Y_{\bigcdot}$ is defined as
\begin{equation*}
\lim_{\epsilon\searrow 0}\frac{1}{2\epsilon}\int_0^tY_s(X_{s+\epsilon}-X_s)ds,
\end{equation*}
provided this limit exists in the ucp-topology. In order to construct a solution to \eqref{app1}, we need a version of the It\^{o} formula for processes $Y_{\bigcdot}$, which have a finite cubic variation. A continuous process is said to have a finite strong cubic variation (or 3-variation), denoted by $[Y,Y,Y]$, if 
\begin{equation*}
[Y,Y,Y]:=\lim_{\epsilon\searrow 0}\frac{1}{\epsilon}\int_0^t(Y_{s+\epsilon}-Y_s)^3ds
\end{equation*}
exists in ucp as well as 
\begin{equation*}
\sup_{0< \epsilon\le 1}\frac{1}{\epsilon}\int_0^1(Y_{s+\epsilon}-Y_s)^3ds< \infty\quad \text{a.e.}
\end{equation*}
See \cite{Errami}. Using the concept of finite strong cubic variation, one can show the following It\^{o} formula (see \cite{Errami}).
\begin{theorem}\label{apptheorem}
Assume that $Y_{\bigcdot}$ is a real valued process with finite strong cubic variation and $g\in \mathcal{C}^3$. Then 
\begin{equation*}
g(Y_t)=g(Y_0)+\int_0^tg^{\prime}(Y_s)d^{\circ}Y_s-\frac{1}{12}\int_0^tg^{\prime\prime\prime}(Y_s)d[Y,Y,Y]_s, \quad 0\le t\le 1.
\end{equation*}
\end{theorem}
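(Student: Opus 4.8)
The plan is to follow the Russo--Vallois scheme: recover $g(Y_t)-g(Y_0)$ as a limit of Riemann-type averages via the Lebesgue differentiation theorem, expand $g$ to third order in \emph{trapezoidal} form so that the second-order term drops out entirely, and absorb the remaining genuinely cubic term into the finite strong cubic variation, all in the ucp sense.

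First I would localise. As $Y_{\bigcdot}$ is continuous, each path is bounded on $[0,1]$, so setting $\tau_N:=\inf\{t\le1:|Y_t|\ge N\}$ (so that $\tau_N\uparrow$ with $\tau_N=1^+$ eventually) and replacing $g$ by a $\mathcal C^3$ function with bounded, uniformly continuous derivatives that coincides with $g$ on $[-N,N]$, it is enough to prove the formula on $[0,\tau_N]$ for bounded $g,g',g'',g'''$ and then let $N\to\infty$; I also extend $Y$ beyond $[0,1]$ by freezing it so that the difference quotients below make sense. Now set
\begin{equation*}
I_\epsilon(t):=\frac1\epsilon\int_0^t\big(g(Y_{s+\epsilon})-g(Y_s)\big)\,ds=\frac1\epsilon\int_t^{t+\epsilon}g(Y_u)\,du-\frac1\epsilon\int_0^\epsilon g(Y_u)\,du,
\end{equation*}
so that $I_\epsilon(t)\to g(Y_t)-g(Y_0)$ uniformly in $t$ by continuity of $u\mapsto g(Y_u)$. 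On the other hand, the trapezoidal remainder form of Taylor's theorem applied to $g(Y_{s+\epsilon})-g(Y_s)=\int_{Y_s}^{Y_{s+\epsilon}}g'(u)\,du$ gives, for some $\xi_{s,\epsilon}$ between $Y_s$ and $Y_{s+\epsilon}$,
\begin{equation*}
g(Y_{s+\epsilon})-g(Y_s)=\tfrac12\big(g'(Y_s)+g'(Y_{s+\epsilon})\big)(Y_{s+\epsilon}-Y_s)-\tfrac1{12}\,g'''(\xi_{s,\epsilon})(Y_{s+\epsilon}-Y_s)^3,
\end{equation*}
with \emph{no} quadratic contribution; dividing by $\epsilon$ and integrating yields $I_\epsilon(t)=T_\epsilon(t)-R_\epsilon(t)$ with $T_\epsilon(t):=\frac1{2\epsilon}\int_0^t(g'(Y_s)+g'(Y_{s+\epsilon}))(Y_{s+\epsilon}-Y_s)\,ds$ and $R_\epsilon(t):=\frac1{12\epsilon}\int_0^t g'''(\xi_{s,\epsilon})(Y_{s+\epsilon}-Y_s)^3\,ds$.

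The heart of the argument is $R_\epsilon$. Since $g'''$ is uniformly continuous and $|\xi_{s,\epsilon}-Y_s|\le\sup_{|u-v|\le\epsilon}|Y_u-Y_v|\to0$, one may replace $g'''(\xi_{s,\epsilon})$ by $g'''(Y_s)$ up to an error of size $o(1)\cdot\sup_{0<\epsilon\le1}\frac1\epsilon\int_0^1|Y_{s+\epsilon}-Y_s|^3\,ds$, the latter supremum being a.s. finite by the strong finiteness of the cubic variation. A routine substitution step---approximate the continuous path $s\mapsto g'''(Y_s)$ uniformly by step functions in $s$, invoke the defining convergence $\frac1\epsilon\int_0^t(Y_{s+\epsilon}-Y_s)^3\,ds\to[Y,Y,Y]_t$ for step integrands, and close the gap with the same uniform bound---then gives $R_\epsilon(t)\to\frac1{12}\int_0^t g'''(Y_s)\,d[Y,Y,Y]_s$ ucp, so that $T_\epsilon(t)=I_\epsilon(t)+R_\epsilon(t)$ converges ucp to $g(Y_t)-g(Y_0)+\frac1{12}\int_0^t g'''(Y_s)\,d[Y,Y,Y]_s$.

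Finally I would identify $\lim_\epsilon T_\epsilon$ with the symmetric integral. Substituting $u=s+\epsilon$ in the part of $T_\epsilon$ carrying $g'(Y_{s+\epsilon})$ rewrites
\begin{equation*}
T_\epsilon(t)=\frac1{2\epsilon}\int_0^t g'(Y_s)\big(Y_{s+\epsilon}-Y_{s-\epsilon}\big)\,ds+\rho_\epsilon(t),
\end{equation*}
where $\rho_\epsilon(t)$ collects integrals over intervals of length at most $\epsilon$ and hence is bounded by $C\|g'\|_\infty\sup_{|u-v|\le\epsilon}|Y_u-Y_v|\to0$ uniformly in $t$. Since $T_\epsilon$ has just been shown to converge ucp, so does $\frac1{2\epsilon}\int_0^t g'(Y_s)(Y_{s+\epsilon}-Y_{s-\epsilon})\,ds$; that is, the symmetric integral $\int_0^t g'(Y_s)\,d^{\circ}Y_s$ exists and equals $g(Y_t)-g(Y_0)+\frac1{12}\int_0^t g'''(Y_s)\,d[Y,Y,Y]_s$. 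Rearranging gives the stated formula, and undoing the localisation completes the proof. I expect the cubic remainder to be the only real obstacle: the two convergences there must be made uniform in $t$ and almost sure in $\omega$ simultaneously, which is exactly where one needs the \emph{strong} (not merely pointwise) control of $[Y,Y,Y]$ through the bound on $\frac1\epsilon\int_0^1|Y_{s+\epsilon}-Y_s|^3\,ds$; everything else is the Lebesgue differentiation theorem together with change-of-variables bookkeeping.
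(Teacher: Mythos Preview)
The paper does not give its own proof of this statement: Theorem~\ref{apptheorem} is quoted from \cite{Errami} with only the citation ``see \cite{Errami}'' and is then used as a black box. Your argument is exactly the Russo--Vallois scheme underlying that reference---Lebesgue differentiation to recover $g(Y_t)-g(Y_0)$, the trapezoidal Taylor remainder
\[
g(b)-g(a)=\tfrac12\big(g'(a)+g'(b)\big)(b-a)-\tfrac1{12}g'''(\xi)(b-a)^3
\]
to eliminate the quadratic term, control of the cubic remainder via the strong bound on $\frac1\epsilon\int_0^1|Y_{s+\epsilon}-Y_s|^3\,ds$, and the change of variables that turns $T_\epsilon$ into the symmetric quotient. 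So there is nothing to compare against in the paper itself; your proof is the intended one and is correct.

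Two minor discrepancies with the paper's \emph{statements} are worth flagging, though neither is a flaw in your argument. First, your remainder estimate uses $\sup_{0<\epsilon\le1}\frac1\epsilon\int_0^1|Y_{s+\epsilon}-Y_s|^3\,ds<\infty$, while the paper's definition of strong cubic variation is written without the absolute value; the modulus is present in \cite{Errami} and is what the argument actually needs, so this is a transcription slip in the paper, not a gap in your proof. Second, you identify the symmetric integral via the two-sided quotient $\frac1{2\epsilon}\int_0^t g'(Y_s)(Y_{s+\epsilon}-Y_{s-\epsilon})\,ds$, whereas the paper displays the one-sided $\frac1{2\epsilon}\int_0^t Y_s(X_{s+\epsilon}-X_s)\,ds$; again the two-sided form is the one in \cite{Errami}, and your change-of-variables step in fact shows the two are equivalent up to the boundary terms you already controlled.
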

\begin{remark}
The last term on the right hand side of the equation is a Lebesgue-Stieltjes integral with respect to the bounded variation process $[Y,Y,Y]$.
\end{remark}
\begin{remark}\label{appremark}\leavevmode
\begin{itemize}
\item We mention that for $Y_{\bigcdot}=B_{\bigcdot}^H$, $H\in(\frac{1}{3},\frac{1}{2})$, $[B_{\bigcdot}^H,B_{\bigcdot}^H,B_{\bigcdot}^H]$ is zero a.e.
\item If $X_{\bigcdot}=B_{\bigcdot}^H$ in \eqref{app2}, then it follows from Theorem 6.3.1 in \cite{Biagini} that our stochastic integral in \eqref{app2} equals the Wick-It\^{o}-Skorohod integral. The latter also gives a justification for the definition of the stochastic integral in \eqref{app2} in the general case.
\end{itemize}
\end{remark}
\begin{theorem}\label{end}
Suppose that $H\in(\frac{1}{3},\frac{1}{2})$, $\tilde{\theta}>0$, $\sigma>0$ and $\rho>1+\frac{1}{H\tilde{\theta}}$. Let $\theta=\frac{\tilde{\theta}+1}{\tilde{\theta}}$. Then there exists a unique strong and positive solution to the \textup{SDE} \eqref{app1}.
\end{theorem}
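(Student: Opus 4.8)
The plan is to obtain Theorem~\ref{end} as a deterministic change of variable applied to the auxiliary equation~\eqref{sec2:eq1}, whose well-posedness is already available from Corollary~\ref{corona}. Note first that the parameter restriction $\rho>1+\tfrac1{H\tilde\theta}$ assumed in Theorem~\ref{end} is exactly the hypothesis of Corollary~\ref{corona}, so that equation possesses a unique strong solution $y_t>0$, $0\le t\le1$. The natural candidate is $x_t:=\Psi(y_t)$, where $\Psi$ is the strictly decreasing, $C^{\infty}$ map on $(0,\infty)$ characterised by $\Psi'(y)\,\Psi(y)^{-\theta}=\mathrm{const}$; since $\theta=\tfrac{\tilde\theta+1}{\tilde\theta}$ this ODE integrates to $\Psi(y)=c\,y^{-\tilde\theta}$ for a constant $c>0$ depending on $\sigma,\tilde\sigma,\tilde\theta$, and one is free to fix $\tilde\sigma$ (equivalently $c$) conveniently. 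Positivity of $x_t$ is then immediate from $y_t>0$, and matching the initial data forces the initial value $x$ in~\eqref{sec2:eq1} to be $\Psi^{-1}(x_0)$.

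The heart of the argument is to show that $x_t=\Psi(y_t)$ indeed solves~\eqref{app1}, which rests on a chain rule. The crucial observation is that the restriction $H\in(\tfrac13,\tfrac12)$ makes $[B^H,B^H,B^H]\equiv0$ (first item of Remark~\ref{appremark}); since the drift part $\int_0^{\cdot}\tilde f(s,y_s)\,ds$ of $y$ is continuous and of bounded variation (its density being integrable near $0$ because $H>0$), one also gets $[y,y,y]\equiv0$, so Theorem~\ref{apptheorem} applied to $g=\Psi$ and $Y=y$ yields the \emph{correction-free} identity $\Psi(y_t)=\Psi(y_0)+\int_0^t\Psi'(y_s)\,d^{\circ}y_s$. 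Because $\Psi$ is only $C^3$ on $(0,\infty)$, I would run this through the localising stopping times $\tau_n=\inf\{t\le1:y_t\notin[\tfrac1n,n]\}$ — which satisfy $\tau_n\nearrow1^{+}$ by the non-explosion and positivity bounds established in the proof of Theorem~\ref{thrm1} — apply the It\^o formula on $[0,\tau_n]$ to a $C^3$ extension of $\Psi$, and then pass to the limit in the $\mathrm{ucp}$ topology.

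It remains to identify the resulting dynamics. Substituting $d^{\circ}y_s=\tilde f(s,y_s)\,ds-\tilde\sigma\,d^{\circ}B^H_s$ and $y_s=\Psi^{-1}(x_s)$, the defining property of $\Psi$ turns $-\tilde\sigma\,\Psi'(y_s)$ into $\sigma x_s^{\theta}$, so that $-\tilde\sigma\int_0^t\Psi'(y_s)\,d^{\circ}B^H_s=\int_0^t\sigma x_s^{\theta}\,d^{\circ}B^H_s$. The defining relation~\eqref{app2} — which, by the second item of Remark~\ref{appremark}, also identifies this integral with the Wick--It\^o--Skorohod integral — then rewrites $\int_0^t\sigma x_s^{\theta}\,d^{\circ}B^H_s=\int_0^t\sigma x_s^{\theta}\,dB^H_s+\int_0^tHs^{2H-1}\sigma\theta x_s^{\theta-1}\,ds$. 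The role of the ``$s^{2H-1}y^{-1}$'' summand in $\tilde f$ (see~\eqref{sec2:eq2}) is precisely to absorb this correction under the change of variable, while the remaining summands of $\Psi'(y_s)\tilde f(s,y_s)$ collapse, term by term, to $\alpha_{-1}x_s^{-1}-\alpha_0+\alpha_1x_s-\alpha_2s^{2H-1}x_s^{\rho}$; hence $x_t$ satisfies~\eqref{app1}. For uniqueness one reverses the computation: if $\bar x$ is any positive strong solution of~\eqref{app1}, then $\bar y:=\Psi^{-1}(\bar x)$ solves~\eqref{sec2:eq1} (using, as above, that $[\bar x,\bar x,\bar x]\equiv0$ via~\eqref{app2} and the vanishing cubic variation of $B^H$), so $\bar y=y$ by Corollary~\ref{corona}, and therefore $\bar x=\Psi(\bar y)=x_t$.

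I expect the genuine difficulty to lie not in the algebra of matching the drift coefficients — that is mechanical once the normalising constant is pinned down — but in the stochastic-calculus bookkeeping: verifying that the various symmetric (Russo--Vallois) integrals converge in $\mathrm{ucp}$, that $[y,y,y]$ and $[\bar x,\bar x,\bar x]$ vanish identically so that no second-order term is lost in Theorem~\ref{apptheorem}, and that the localisation commutes with the singular drift near $s=0$ (the integrable factor $s^{2H-1}$ together with the negative powers of $y$, respectively of $x$). For these points I would lean on the a priori pathwise bounds from the proof of Theorem~\ref{thrm1} and on the $L^q$-estimates of Theorem~\ref{highermoment}.
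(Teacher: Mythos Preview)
Your proposal is correct and follows essentially the same route as the paper: transform the solution $y_{\bigcdot}$ of the auxiliary additive equation \eqref{sec2:eq1} via $x_t=g(y_t)$ with $g(y)=\tfrac{1}{\tilde\theta}y^{-\tilde\theta}$ (your $\Psi$), apply the Russo--Vallois It\^o formula using $[y,y,y]=0$ for $H>\tfrac13$, convert the symmetric integral to the one defined in \eqref{app2} so that the $-\tilde\sigma H s^{2H-1}y^{-1}(\tilde\theta+1)$ term of $\tilde f$ cancels the correction, fix $\tilde\sigma=\tilde\theta^{-\theta}\sigma$, and run the inverse transformation $g^{-1}$ for uniqueness. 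The only difference is that you make explicit the localisation by stopping times to handle the fact that $g\in C^3$ only on $(0,\infty)$, a point the paper leaves implicit.
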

\begin{proof}
Let $y_{\bigcdot}$ be the unique strong and positive solution to
\begin{equation*}
y_t=x+\int_0^t\tilde{f}(s,y_s)ds-\tilde{\sigma}B_t^H, \quad 0\le t\le 1,\quad x>0,
\end{equation*}
where $\tilde{f}$ is defined as in Section \ref{sec3}. Define $g\in \mathcal{C}^3$ by $g(y)=\frac{y^{-\tilde{\theta}}}{\tilde{\theta}}$. Then Theorem \ref{apptheorem} entails that 
\begin{equation*}
x_t:=g(y_t)=\frac{x^{-\tilde{\theta}}}{\tilde{\theta}}+\int_0^t(-1)y_s^{-(\tilde{\theta}+1)}d^{\circ}y_s-\frac{1}{12}\int_0^tg^{\prime\prime\prime}(y_s)d[y,y,y]_s.
\end{equation*}
Since $[B_{\bigcdot}^H,B_{\bigcdot}^H,B_{\bigcdot}^H]$ is zero a.e. (see Remark \ref{appremark}), we observe that $[y,y,y]$ is zero a.e. So
\begin{align*}
x_t&=\frac{x^{-\tilde{\theta}}}{\tilde{\theta}}+\int_0^t(-1)y_s^{-(\tilde{\theta}+1)}d^{\circ}y_s\\
&=\frac{x^{-\tilde{\theta}}}{\tilde{\theta}}+\int_0^t(-1)y_s^{-(\tilde{\theta}+1)}\tilde{f}(s,y_s)ds+\int_0^t\tilde{\sigma} y_s^{-(\tilde{\theta}+1)}d^{\circ} B_s^H\\
&=\frac{x^{-\tilde{\theta}}}{\tilde{\theta}}-\int_0^t\Big\{(-1)y_s^{-(\tilde{\theta}+1)}\tilde{f}(s,y_s)-H\tilde{\sigma} s^{2H-1}y_s^{-(\tilde{\theta}+2)}(\tilde{\theta}+1)\Big\}ds+\int_0^t\tilde{\sigma} y_s^{-(\tilde{\theta}+1)}dB_s^H.
\end{align*}
Since we can write $(y_s)^{-(\tilde{\theta}+1)}=\tilde{\theta}^{\theta}\Big(\frac{y_s^{-\tilde{\theta}}}{\tilde{\theta}}\Big)^{\theta}$, we now have
\begin{align*}
x_t&=\frac{x^{-\tilde{\theta}}}{\tilde{\theta}}+\int_0^tf(s,\frac{y_s^{-\tilde{\theta}}}{\tilde{\theta}})ds+\int_0^t\tilde{\sigma} y_s^{-(\tilde{\theta}+1)}dB_s^H\\
&=\frac{x^{-\tilde{\theta}}}{\tilde{\theta}}+\int_0^tf(s,\frac{y_s^{-\tilde{\theta}}}{\tilde{\theta}})ds+\int_0^t\tilde{\sigma} \tilde{\theta}^{\theta}(x_s)^{\theta}dB_s^H,
\end{align*}
where $f(s,y):=\alpha_{-1}y^{-1}-\alpha_0+\alpha_1y-\alpha_2s^{2H-1}y^{\rho}$, $s\in (0,1]$, $y\in(0,\infty)$. So $x_{\bigcdot}$ satisfies the SDE \eqref{app1} if we choose $\tilde{\sigma}=\tilde{\theta}^{-\theta}\sigma$ for $\sigma>0$. In order to show the uniqueness of solutions to SDE \eqref{app1}, one can apply the It\^{o} formula in Theorem \ref{apptheorem} to the inverse function $g^{-1}$ given by $g^{-1}(y)=\big(\tilde{\theta}\big)^{-\frac{1}{\tilde{\theta}}}    y^{-\frac{1}{\tilde{\theta}}}$ by using the fact that $[B_{\bigcdot}^H,B_{\bigcdot}^H,B_{\bigcdot}^H]=0$ a.e. for $H\in(\frac{1}{3},\frac{1}{2})$.
\end{proof}
Finally, using the same arguments as in the proof of Theorem \ref{end}, we also get the following result for the alternative Ait-Sahalia model \eqref{intro:eq:3}:
\begin{theorem}\label{final}
Retain the conditions of Theorem \ref{end} with respect to $H,\tilde{\theta}, \theta$ and $\rho$. Then there exists a unique strong solution $x_t>0$ to \textup{SDE} \eqref{intro:eq:3}.
\end{theorem}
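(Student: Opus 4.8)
The plan is to follow the proof of Theorem~\ref{end} almost verbatim, changing only the auxiliary additive-noise SDE so as to match the drift of \eqref{intro:eq:3} (which carries $-\alpha_2 x_s^{\rho}$ rather than $-\alpha_2 s^{2H-1}x_s^{\rho}$) and the fact that \eqref{intro:eq:3} is written directly in the symmetric integral, so that no Wick--It\^o--Skorohod correction term is needed. Concretely, set $\tilde\sigma:=\tilde\theta^{-\theta}\sigma$ and let $\hat f$ be obtained from $\tilde f$ in \eqref{sec2:eq2} by deleting the factor $s^{2H-1}$ from the $\alpha_2$-term and dropping the last term altogether, i.e.
\[
\hat f(y):=-\alpha_{-1}\tilde\theta\, y^{2\tilde\theta+1}+\alpha_0 y^{\tilde\theta+1}-\frac{\alpha_1}{\tilde\theta}\,y+\frac{\alpha_2}{\tilde\theta^{\rho}}\,y^{-\tilde\theta\rho+\tilde\theta+1},\qquad y>0,
\]
and consider $y_t=x+\int_0^t\hat f(y_s)\,ds-\tilde\sigma B^H_t$, $0\le t\le1$, $x>0$.

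The first step is to show this auxiliary SDE has a unique positive strong solution $y_{\bigcdot}$ on $[0,1]$, by fitting $\hat f$ into the scheme of Theorem~\ref{thrm1}. Since $\hat f\in C^1((0,\infty))$ with $\hat f'(y)\to-\infty$ both as $y\to0^+$ (the $y^{-\tilde\theta\rho+\tilde\theta}$ term, whose exponent is $<-1$) and as $y\to\infty$ (the $y^{2\tilde\theta}$ term), $\hat f'$ is bounded above on $(0,\infty)$, so $\hat f'(y)\le K\le K t^{2H-1}=K_t$ on $(0,1]\times(0,\infty)$, which is (A1). For large $y$ the leading term $-\alpha_{-1}\tilde\theta y^{2\tilde\theta+1}$ forces $\hat f<0$, so $\hat f(y)\le0\le h_2 t^{2H-1}(y+1)$, which is (A3). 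Near the origin $\hat f(y)\sim\alpha_2\tilde\theta^{-\rho}y^{-\alpha}$ with $\alpha:=\tilde\theta\rho-\tilde\theta-1>\tfrac1H-1$ (this is exactly where $\rho>1+\tfrac1{H\tilde\theta}$ enters), so $\hat f(y)\ge h_1 y^{-\alpha}$ for small $y$. The one subtlety is that $\hat f$ is autonomous and hence does \emph{not} satisfy (A2) as literally stated, the factor $t^{2H-1}$ being unbounded on $(0,1]$. However, an inspection of the proof of Theorem~\ref{thrm1} shows that near the hitting time $\tau_0$ only the $t$-free bound $b(s,x_s)\ge h_1 x_s^{-\alpha}$ is actually used (the argument there passes from $s^{2H-1}x_s^{-\alpha}$ to $\tau_0^{2H-1}x_s^{-\alpha}$, and $\tau_0^{2H-1}\ge1$), and the H\"older-seminorm contradiction yielding $\tau_0=1^+$ only requires $\alpha>\tfrac1H-1$; the non-explosion part uses (A3) verbatim. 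Hence $y_{\bigcdot}$ exists, is unique, strong and strictly positive on $[0,1]$, just as in Corollary~\ref{corona}.

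The second step is the transformation. Put $g(y):=y^{-\tilde\theta}/\tilde\theta\in C^3((0,\infty))$ and $x_t:=g(y_t)>0$. Being the sum of a $C^1$-drift process and $-\tilde\sigma B^H_{\bigcdot}$, the process $y_{\bigcdot}$ has vanishing strong cubic variation because $[B^H_{\bigcdot},B^H_{\bigcdot},B^H_{\bigcdot}]\equiv0$ a.e.\ for $H\in(\tfrac13,\tfrac12)$ (Remark~\ref{appremark}); so the It\^o formula of Theorem~\ref{apptheorem} gives $x_t=g(x)+\int_0^t g'(y_s)\,d^{\circ}y_s$ with no third-order term. Splitting $d^{\circ}y_s$ by linearity of the symmetric integral into its absolutely continuous part and $-\tilde\sigma\,d^{\circ}B^H_s$, and using $g'(y)=-y^{-(\tilde\theta+1)}$ together with the identities $\tilde\theta\theta=\tilde\theta+1$, $y_s^{-(\tilde\theta+1)}=\tilde\theta^{\theta}x_s^{\theta}$ and $\tilde\sigma\tilde\theta^{\theta}=\sigma$, a term-by-term computation gives $-y_s^{-(\tilde\theta+1)}\hat f(y_s)=\alpha_{-1}x_s^{-1}-\alpha_0+\alpha_1 x_s-\alpha_2 x_s^{\rho}$ and $\tilde\sigma y_s^{-(\tilde\theta+1)}=\sigma x_s^{\theta}$, so $x_{\bigcdot}$ solves \eqref{intro:eq:3} with initial value $g(x)$, which ranges over all of $(0,\infty)$ as $x$ does. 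For uniqueness, if $\bar x_{\bigcdot}$ is any positive strong solution of \eqref{intro:eq:3}, then $\bar x_{\bigcdot}$ also has zero strong cubic variation (its noise part is a symmetric integral against $B^H_{\bigcdot}$, whose cubic variation vanishes), so Theorem~\ref{apptheorem} applied to $g^{-1}(x)=\tilde\theta^{-1/\tilde\theta}x^{-1/\tilde\theta}\in C^3((0,\infty))$ shows $g^{-1}(\bar x_{\bigcdot})$ solves the auxiliary additive-noise SDE with the same initial value $x$; uniqueness there forces $g^{-1}(\bar x_{\bigcdot})=y_{\bigcdot}$, i.e.\ $\bar x_{\bigcdot}=x_{\bigcdot}$.

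The main obstacle I anticipate is not the It\^o-formula bookkeeping (routine and parallel to Theorem~\ref{end}) but the two places where the Russo--Vallois framework must be handled with care: verifying that the autonomous $\hat f$ still falls under Theorem~\ref{thrm1} despite failing (A2) verbatim, and justifying that an arbitrary positive strong solution of \eqref{intro:eq:3} has vanishing strong cubic variation, which is what legitimises the inverse transformation in the uniqueness argument.
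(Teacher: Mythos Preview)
Your proposal is correct and follows essentially the same route as the paper: the authors replace $\tilde f$ in \eqref{sec2:eq2} by exactly your autonomous $\hat f$ (their \eqref{end:eq1}), invoke Theorem~\ref{thrm1} via the verification in Corollary~\ref{corona}, and then repeat the It\^o--formula transformation and inverse transformation of Theorem~\ref{end}. Your explicit observation that the autonomous drift fails (A2) verbatim (since $t^{2H-1}\to\infty$ as $t\to0^+$) but that the proof of Theorem~\ref{thrm1} only uses the $t$-free lower bound $b(s,x)\ge h_1 x^{-\alpha}$ near $\tau_0$ is in fact more careful than the paper, which simply asserts that $\hat f$ ``satisfies the assumptions of Theorem~\ref{thrm1}''.
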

\begin{proof}
Just as in the proof of Theorem \ref{end}, we can consider the SDE \eqref{sec2:eq1}, where the vector field $\tilde{f}$ now is given by
\begin{align}\label{end:eq1}
\tilde{f}(s,y)&=\alpha_{-1}(-\tilde{\theta} y^{2\tilde{\theta}+1})+\alpha_0y^{\tilde{\theta}+1}-\alpha_1\frac{y}{\tilde{\theta}}+\alpha_2 \frac{1}{\tilde{\theta}^{\rho}} y^{-\tilde{\theta}\rho+\tilde{\theta}+1}
\end{align}
for $0<y < \infty$. Then as in the proof of Corollary \eqref{corona} one immediately verifies that $\tilde{f}$ satisfies the assumptions of Theorem \ref{thrm1}, which yields a unique strong solution $y_t>0$ to \eqref{sec2:eq1} in this case. In exactly the same way, we also obtain the results of Theorem \ref{highermoment} and Theorem \ref{Thrm 4.2} with respect to $\tilde{f}$ in \eqref{end:eq1}. Finally, we can apply the It\^{o} formula as in the proof of Theorem \ref{end} and construct a unique strong solution $x_t>0$ to \eqref{intro:eq:3} based on $y_{\bigcdot}$.
\end{proof}

\section{Appendix}
\bigskip For some of the proofs in this article we need to recall some basic concepts from fractional calculus (see 
\cite{Lizorkin} and \cite{Samko}).

Let $a,$ $b\in \mathbb{R}$ with $a<b$. Let $f\in L^{p}([a,b])$ with $p\geq 1$
and $\alpha >0$. Then the \emph{left-} and \emph{right-sided
Riemann-Liouville fractional integrals} are defined as
\begin{equation*}
I_{a^{+}}^{\alpha }f(x)=\frac{1}{\Gamma (\alpha )}\int_{a}^{x}(x-y)^{\alpha
-1}f(y)dy
\end{equation*}%
and 
\begin{equation*}
I_{b^{-}}^{\alpha }f(x)=\frac{1}{\Gamma (\alpha )}\int_{x}^{b}(y-x)^{\alpha
-1}f(y)dy
\end{equation*}%
for almost all $x\in \lbrack a,b]$. Here $\Gamma $ is the Gamma
function.

Let $p\geq 1$ and let $I_{a^{+}}^{\alpha }(L^{p})$ (resp. $I_{b^{-}}^{\alpha
}(L^{p})$) be the image of $L^{p}([a,b])$ of the operator $I_{a^{+}}^{\alpha
}$ (resp. $I_{b^{-}}^{\alpha }$). If $f\in I_{a^{+}}^{\alpha }(L^{p})$
(resp. $f\in I_{b^{-}}^{\alpha }(L^{p})$) and $0<\alpha <1$ then we can
define the \emph{left-} and \emph{right-sided Riemann-Liouville
fractional derivatives} by 
\begin{equation*}
D_{a^{+}}^{\alpha }f(x)=\frac{1}{\Gamma (1-\alpha )}\frac{d}{dx}\int_{a}^{x}%
\frac{f(y)}{(x-y)^{\alpha }}dy
\end{equation*}%
and 
\begin{equation*}
D_{b^{-}}^{\alpha }f(x)=\frac{1}{\Gamma (1-\alpha )}\frac{d}{dx}\int_{x}^{b}%
\frac{f(y)}{(y-x)^{\alpha }}dy.
\end{equation*}

The left- and right-sided derivatives of $f$ can be represented as
\begin{equation*}
D_{a^{+}}^{\alpha }f(x)=\frac{1}{\Gamma (1-\alpha )}\left( \frac{f(x)}{%
(x-a)^{\alpha }}+\alpha \int_{a}^{x}\frac{f(x)-f(y)}{(x-y)^{\alpha +1}}%
dy\right)
\end{equation*}%
and 
\begin{equation*}
D_{b^{-}}^{\alpha }f(x)=\frac{1}{\Gamma (1-\alpha )}\left( \frac{f(x)}{%
(b-x)^{\alpha }}+\alpha \int_{x}^{b}\frac{f(x)-f(y)}{(y-x)^{\alpha +1}}%
dy\right) .
\end{equation*}

The above definitions imply that
\begin{equation*}
I_{a^{+}}^{\alpha }(D_{a^{+}}^{\alpha }f)=f
\end{equation*}%
for all $f\in I_{a^{+}}^{\alpha }(L^{p})$ and 
\begin{equation*}
D_{a^{+}}^{\alpha }(I_{a^{+}}^{\alpha }f)=f
\end{equation*}%
for all $f\in L^{p}([a,b])$ and similarly for $I_{b^{-}}^{\alpha }$ and $%
D_{b^{-}}^{\alpha }$.

\bigskip

Denote by $B^{H}=\{B_{t}^{H},t\in \lbrack 0,T]\}$ a $d$-dimensional \emph{%
fractional Brownian motion} with Hurst parameter $H\in (0,\frac{1}{2})$. The later means that $B_{\cdot }^{H}$ is a centered Gaussian process with a covariance
function given by 
\begin{equation*}
(R_{H}(t,s))_{i,j}:=E[B_{t}^{H,(i)}B_{s}^{H,(j)}]=\delta _{ij}\frac{1}{2}%
\left( t^{2H}+s^{2H}-|t-s|^{2H}\right) ,\quad i,j=1,\dots ,d,
\end{equation*}%
where $\delta _{ij}$ is one, if $i=j$, or zero else.

In the sequel, we also shortly recall the construction of the fractional
Brownian motion, which can be found in \cite{Nualart}. For convenience, we
restrict ourselves to the case $d=1$.

Denote by $\mathcal{E}$ the class of step functions on $[0,T]$ and Let $%
\mathcal{H}$ be the Hilbert space which one gets through the completion of $%
\mathcal{E}$ with respect to the inner product 
\begin{equation*}
\langle 1_{[0,t]},1_{[0,s]}\rangle _{\mathcal{H}}=R_{H}(t,s).
\end{equation*}%
The latter provides an extension of the mapping $1_{[0,t]}\mapsto B_{t}$ to an
isometry between $\mathcal{H}$ and a Gaussian subspace of $L^{2}(\Omega )$
with respect to $B^{H}$. Let $\varphi \mapsto B^{H}(\varphi )$ be this
isometry.

If $H<\frac{1}{2}$, one finds that the covariance function $R_{H}(t,s)$ can be
represented as

\bigskip\ 
\begin{equation}
R_{H}(t,s)=\int_{0}^{t\wedge s}K_{H}(t,u)K_{H}(s,u)du,  \label{2.2}
\end{equation}%
where 
\begin{equation}
K_{H}(t,s)=c_{H}\left[ \left( \frac{t}{s}\right) ^{H-\frac{1}{2}}(t-s)^{H-%
\frac{1}{2}}+\left( \frac{1}{2}-H\right) s^{\frac{1}{2}-H}\int_{s}^{t}u^{H-%
\frac{3}{2}}(u-s)^{H-\frac{1}{2}}du\right] .  \label{KH}
\end{equation}%
Here $c_{H}=\sqrt{\frac{2H}{(1-2H)\beta (1-2H,H+\frac{1}{2})}}$ and $\beta $ is the
Beta function. See \cite[Proposition 5.1.3]{Nualart}.

Using the kernel $K_{H}$, one can obtain via (\ref{2.2}) an isometry $%
K_{H}^{\ast }$ between $\mathcal{E}$ and $L^{2}([0,T])$ such that $%
(K_{H}^{\ast }1_{[0,t]})(s)=K_{H}(t,s)1_{[0,t]}(s).$ This isometry allows
for an extension to the Hilbert space $\mathcal{H}$, which has the following
representations in terms of fractional derivatives

\begin{equation*}
(K_{H}^{\ast }\varphi )(s)=c_{H}\Gamma \left( H+\frac{1}{2}\right) s^{\frac{1%
}{2}-H}\left( D_{T^{-}}^{\frac{1}{2}-H}u^{H-\frac{1}{2}}\varphi (u)\right)
(s)
\end{equation*}%
and 
\begin{align*}
(K_{H}^{\ast }\varphi )(s)=& \,c_{H}\Gamma \left( H+\frac{1}{2}\right)
\left( D_{T^{-}}^{\frac{1}{2}-H}\varphi (s)\right) (s) \\
& +c_{H}\left( \frac{1}{2}-H\right) \int_{s}^{T}\varphi (t)(t-s)^{H-\frac{3}{%
2}}\left( 1-\left( \frac{t}{s}\right) ^{H-\frac{1}{2}}\right) dt.
\end{align*}%
for $\varphi \in \mathcal{H}$. One can also prove that $\mathcal{H}%
=I_{T^{-}}^{\frac{1}{2}-H}(L^{2})$. See \cite{DU} and \cite[Proposition 6]%
{alos}.

We know that $K_{H}^{\ast }$ is an isometry from $\mathcal{H}$ into $%
L^{2}([0,T])$. Thus, the $d$-dimensional process $W=\{W_{t},t\in \lbrack
0,T]\}$ defined by 
\begin{equation}
W_{t}:=B^{H}((K_{H}^{\ast })^{-1}(1_{[0,t]})) \label{WBH}
\end{equation}%
is a Wiener process and the process $B^{H}$ has the representation 
\begin{equation}
B_{t}^{H}=\int_{0}^{t}K_{H}(t,s)dW_{s}.  \label{BHW}
\end{equation}%

\end{document}